\newtheorem{theorem}{Theorem}[section]
\newtheorem{proposition}[theorem]{Proposition}
\newtheorem{corollary}[theorem]{Corollary}
\theoremstyle{definition}
\theoremstyle{remark}
\newtheorem{remark}[theorem]{Remark}
\numberwithin{equation}{section}
\renewcommand{\P}{\mathbb{P}}
\newcommand{\C}{\mathbb{C}}
\newcommand{\fk}{\mathfrak}
\newcommand{\fkS}{\mathfrak{S}}
\newcommand{\RC}{\mathrm{RC}}
\newcommand{\Red}{\mathrm{Red}}
\newcommand{\BB}{\mathrm{BB}}
\renewcommand{\fk}[1]{\mathfrak{#1}}
\begin{document}

\title[]{dual Schubert polynomials via a cauchy identity}

\date{\today}

\author{Zachary Hamaker}

\begin{abstract}
	We give a combinatorial proof that Postnikov and Stanley's formula for dual Schubert polynomials in terms of weighted chains in Bruhat order is equivalent to a classical Cauchy identity for polynomials.
	This gives a natural interpretation of Huang and Pylyavskyy's recent insertion algorithms.
\end{abstract}

\maketitle

\section{Introduction}

Recall $S_\infty$ is the direct limit of the sequence of symmetric groups under the natural inclusion $S_n \hookrightarrow S_{n+1}$.
The Schubert polynomials $\{\fk S_w\}_{w \in S_\infty}$ were introduced in~\cite{lascoux1982structure} by Lascoux and Sch\"utzenberger as specific representatives for the cohomology of the flag variety.
They are defined using the divided difference operators of Bernstein, Gelfand and Gelfand~\cite{bernstein1973schubert}.
Schubert polynomials are multivariate polynomials indexed by permutations and form a basis for the polynomial ring $\mathbb{C}[x_1,x_2,\dots]$.
Therefore, Schubert products expand into Schubert polynomials:
\[
\fkS_u \cdot \fkS_v = \sum_w c^w_{uv} \fkS_w
\]
The quantities $c^w_{uv}$, which we call \emph{(generalized) Littlewood--Richardson coefficients}, are known to be non-negative integers since they are structure coefficients for the cup product in the cohomology of the complete flag variety.
Giving a combinatorial formula for Littlewood--Richardson coefficients is widely acknowledged as one of the most important open problems in algebraic combinatorics.
Only special cases are known, beginning with Monk's rule for computing $\fkS_{s_k} \cdot \fkS_w$~\cite{monk1959geometry}.

For $f,g \in \mathbb{C}[x_1,x_2,\dots]$, define the inner product
\begin{equation}
	\label{eq:inner_product}
	\langle f, g \rangle := f\left(\frac{\partial}{\partial x_1},\frac{\partial}{\partial x_2}, \dots\right) \cdot g(x_1,x_2, \dots)  
	\mid_{x_1 = x_2 = \dots = 0}.
\end{equation}
Note this inner product restricts to any finite set of variables.
Dual bases for $\langle \cdot, \cdot \rangle$ are characterized by a classical Cauchy identity.
Let $P_n: \C[x_1,x_2,\dots] \to \C[x_1,\dots,x_n]$ be the obvious projection operator.
\begin{theorem}[Cauchy identity, {see~\cite[Thm~5.2]{postnikov2009chains}}]
\label{t:cauchy}
Let $\{f_w\},\{g_w\} \subseteq \C[x_1,x_2,\dots]$ be homogeneous  so that $\{P_n(f_w)\} \setminus \{0\}$ and $\{P_n(g_w)\}\setminus \{0\}$ are bases of $\C[x_1,\dots,x_n]$ for any $n$.\footnote{We need this projection as the `dual basis' we will consider fails to be a basis of $\C[x_1,x_2,\dots]$.
Instead, it is a basis of $\C[x_1-x_2,x_2-x_3,\dots]$ and restricts to bases of $\C[x_1,\dots,x_n]$ for any $n$.}
Then $\langle f_u, g_v \rangle = \delta_{uv}$ if and only if
\begin{equation}
\label{eq:cauchy}
e^{x_1y_1 + x_2y_2 + \dots} = \sum_{w \in S_\infty} f_w(x_1,x_2,\dots) \cdot g_w(y_1,y_2,\dots).	
\end{equation}	
\end{theorem}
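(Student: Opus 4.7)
The plan is to reduce the equivalence to the elementary monomial orthogonality and the familiar exponential kernel, then invoke a finite-dimensional linear-algebra argument. First, I would compute directly from~\eqref{eq:inner_product} that $\langle x^\alpha, x^\beta\rangle = \alpha!\,\delta_{\alpha\beta}$, where $\alpha! = \prod_i \alpha_i!$. This identifies $\{x^\alpha\}$ and $\{x^\alpha/\alpha!\}$ as a distinguished pair of dual bases, and the corresponding Cauchy kernel is the standard expansion
\[
e^{x_1 y_1 + x_2 y_2 + \cdots} = \sum_\alpha \frac{x^\alpha}{\alpha!}\,y^\alpha,
\]
so the theorem already holds for this pair; it then remains to show that passing to an arbitrary pair of bases preserves the equivalence.

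Next, I would reduce to a finite-variable, fixed-degree statement. Because every $f_w$ and $g_w$ is homogeneous, both sides of~\eqref{eq:cauchy} split by total degree, and applying $P_n \otimes P_n$ turns~\eqref{eq:cauchy} into a family of identities in $\C[x_1,\ldots,x_n]\otimes\C[y_1,\ldots,y_n]$ indexed by $n$ and degree $d$; by construction $\langle\cdot,\cdot\rangle$ likewise restricts cleanly to $n$ variables. Within the finite-dimensional space of degree-$d$ homogeneous polynomials in $n$ variables, the basis hypothesis gives two sets of nonzero projections indexed by finite subsets of $S_\infty$. A dimension count pins down the size of each subset; whichever of the two conditions (orthogonality or~\eqref{eq:cauchy}) we are assuming, a short argument using the non-degeneracy of $\langle\cdot,\cdot\rangle$ on each degree piece forces the two subsets to agree.

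The remainder is routine linear algebra. Writing the nonzero projections as $P_n(f_w) = \sum_\alpha a_{w,\alpha}\,x^\alpha$ and $P_n(g_w) = \sum_\beta b_{w,\beta}\,y^\beta$ on the common index set, set $A_{w,\alpha}=a_{w,\alpha}$ and $\tilde B_{w,\alpha} = \alpha!\,b_{w,\alpha}$. Equating coefficients of $x^\alpha y^\beta$ in~\eqref{eq:cauchy} becomes $A^\top \tilde B = I$, while expanding $\langle f_u, g_v\rangle = \sum_\alpha \alpha!\,a_{u,\alpha} b_{v,\alpha}$ becomes $A\,\tilde B^\top = I$. For square matrices these two one-sided inverse identities are equivalent, which completes the argument. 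The main obstacle is the bookkeeping required for the finite reduction — ensuring that the $w$-index set is handled compatibly across varying $n$, and that the formal power series identity~\eqref{eq:cauchy} transfers cleanly between its infinite- and finite-variable incarnations. Once that is set up, the proof collapses to the familiar observation that the Gram-matrix identity characterizing dual bases and the reproducing-kernel identity are two sides of the same coin.
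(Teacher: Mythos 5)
First, a point of comparison: the paper does not prove this theorem at all --- it is quoted from Postnikov and Stanley \cite[Thm~5.2]{postnikov2009chains} --- so your argument has to stand entirely on its own. Your opening computations are fine: $\langle x^\alpha,x^\beta\rangle=\alpha!\,\delta_{\alpha\beta}$, the expansion $e^{\sum x_iy_i}=\sum_\alpha x^\alpha y^\alpha/\alpha!$, and the reformulation of the two conditions as $A^{\top}\tilde B=I$ versus $A\tilde B^{\top}=I$ are all correct. The genuine gap is the reduction to \emph{finite square} matrices, and it fails in two concrete ways, both already visible for the pair $(\fkS_w,\fk D_w)$ to which the theorem is applied. (i) The inner product does not commute with projecting both arguments: the paper's remark that $\langle\cdot,\cdot\rangle$ ``restricts to any finite set of variables'' only means the same formula defines a pairing on $\C[x_1,\dots,x_n]$, not that $\langle P_nf,P_ng\rangle=\langle f,g\rangle$. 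Indeed $\langle \fkS_{s_2},\fk D_{s_1}\rangle=\langle x_1+x_2,\,x_1-x_2\rangle=0$, while $\langle P_1\fkS_{s_2},P_1\fk D_{s_1}\rangle=\langle x_1,x_1\rangle=1$. So biorthogonality is destroyed by $P_n$, and the identity $A\tilde B^{\top}=I$ for the projected coefficient matrices is not implied by the hypothesis (and is generally false). (ii) The index sets are neither finite nor equal: in degree $1$ with $n=1$ one has $P_1(\fkS_{s_k})=x_1\neq 0$ for \emph{every} $k$, whereas $P_1(\fk D_{s_k})=P_1(x_k-x_{k+1})=0$ for all $k\geq 2$. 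The hypothesis only says the set of \emph{distinct} nonzero projections is a basis; it says nothing about the fibers of $w\mapsto P_n(f_w)$, which can be infinite, and a dimension count bounds the number of distinct values, not the number of $w$'s. Consequently your matrices are infinite and not square, and the step you rely on --- that a one-sided inverse of a square matrix is two-sided --- is precisely what is unavailable for infinite or rectangular matrices.

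To repair this you cannot route the argument through $P_n$ at all for the pairing; the honest version works degree by degree in all the variables, where $\langle x^\alpha,x^\beta\rangle=\alpha!\,\delta_{\alpha\beta}$ already diagonalizes the form, and uses the basis hypothesis (via the projections, which is their only role in the statement) to control the infinite sums: one must argue that each coefficient of $\sum_w f_w(x)g_w(y)$ involves only finitely many $w$, and that every polynomial has a finite expansion $h=\sum_w\langle g_w,h\rangle f_w$, so that the Cauchy identity is exactly the statement that the kernel reproduces polynomials, which is then equivalent to biorthogonality of the two families. That local-finiteness bookkeeping is the real content hiding behind your phrase ``routine linear algebra,'' and as written your proposal does not supply it; for a complete treatment one should follow the argument in \cite{postnikov2009chains}.
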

In a slightly different guise, Bernstein, Gelfand and Gelfand~\cite{bernstein1973schubert} introduced \emph{dual Schubert polynomials} $\{\fk D_w\}_{w \in S_\infty}$, which are dual to Schubert polynomials in that
\begin{equation}
	\langle \fkS_u, \fk D_w \rangle = \delta_{uw}.
\label{eq:SD}
\end{equation}
Postnikov and Stanley made this connection explicit and give a detailed formal treatment of dual Schubert polynomials, developing their basic properties~\cite{postnikov2009chains}.
Write $u \to v$ when $u$ is covered by $w$ in Bruhat order and $\mu(u \to v) = x_a - x_b$ where $v = u(a,b)$.
They define the \emph{skew dual Schubert polynomial} for $u,w \in S_\infty$ as
\begin{equation}
\label{eq:dual-schub}
	\fk{D}_{u,w} := \frac{1}{p!} \sum_{C: u = v_0 \to v_1 \to v_2 \to \dots \to v_p = w} \prod_{i = 1}^p \mu(v_{i-1} \to v_i).
\end{equation}
Using methods from linear algebra and algebraic geometry, they  prove:
\begin{theorem}[{\cite{postnikov2009chains}}]
\label{t:dual-def}
	For $w \in S_\infty$, $\fk{D}_w = \fk{D}_{1,w}$.
\end{theorem}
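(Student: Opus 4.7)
The plan is to apply Theorem~\ref{t:cauchy} with $f_w := \fkS_w$ and $g_w(y) := \fk{D}_{1,w}(y)$ (the chain sum in~(\ref{eq:dual-schub}) with the variables relabeled). Granting a routine basis check that $\{P_n\fk{D}_{1,w}\}\setminus\{0\}$ is a basis of $\C[y_1,\dots,y_n]$ for every $n$, the theorem reduces the desired identity $\fk{D}_w = \fk{D}_{1,w}$ to verifying the Cauchy identity
\[
\exp\Bigl(\sum_i x_i y_i\Bigr) \;=\; \sum_{w\in S_\infty}\fkS_w(x)\,\fk{D}_{1,w}(y).
\]
Since $\fkS_w$ has degree $\ell(w)$ and each chain in~(\ref{eq:dual-schub}) from $1$ to $w$ has length $\ell(w)$, matching terms of degree $p$ on each side (and clearing the factors $1/p!$) further reduces the Cauchy identity to
\[
\Bigl(\sum_i x_i y_i\Bigr)^{p} \;=\; \sum_{C:\, 1 = v_0 \to \cdots \to v_p}\fkS_{v_p}(x)\prod_{i=1}^{p}\mu(v_{i-1}\to v_i),
\]
where $\mu(u \to u(a,b)) := y_a - y_b$.

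The decisive step is an operator identity that promotes multiplication by $\sum_k x_k y_k$ to a weighted Bruhat-cover operator on Schubert polynomials:
\[
\Bigl(\sum_k x_k y_k\Bigr)\fkS_w \;=\; \sum_{w \to v}\mu(w \to v)\,\fkS_v.
\]
I would establish this by substituting $x_k = \fkS_{s_k} - \fkS_{s_{k-1}}$ (with $\fkS_{s_0} := 0$) and invoking Monk's rule $\fkS_{s_k}\fkS_w = \sum_{a\le k<b,\, w \to w(a,b)}\fkS_{w(a,b)}$. For each cover $w \to v = w(a,b)$, the coefficient of $\fkS_v$ in $x_k\fkS_w$ is $[a\le k<b] - [a<k\le b] = \delta_{k,a} - \delta_{k,b}$; weighting by $y_k$ and summing over $k$ then yields precisely $(y_a - y_b)\fkS_v = \mu(w \to v)\fkS_v$, as required.

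Once this operator identity is in hand, iterating it $p$ times starting from $\fkS_1 = 1$ delivers the degree-$p$ display, and summing over $p$ with the factors $1/p!$ assembles $\exp(\sum_i x_i y_i)$ on the left and the full chain expansion $\sum_w\fkS_w(x)\fk{D}_{1,w}(y)$ on the right. Theorem~\ref{t:cauchy} then yields $\fk{D}_{1,w} = \fk{D}_w$. The main obstacle is really just the Abel-summation/telescoping computation in the Monk's rule step; everything else is a mechanical rearrangement. A secondary task is verifying the basis property of $\{P_n\fk{D}_{1,w}\}\setminus\{0\}$, which I would handle by inspecting the leading monomial of each chain sum under an appropriate term order so that distinct $w$ (with $P_n\fk{D}_{1,w}\neq 0$) yield distinct leading monomials and the right count in each degree.
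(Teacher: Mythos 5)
Your proof is correct, but at the decisive step it takes a genuinely different route from the paper. Both arguments share the same skeleton: use Theorem~\ref{t:cauchy} to reduce $\fk{D}_w=\fk{D}_{1,w}$ to the Cauchy identity $e^{\sum_i x_iy_i}=\sum_w\fkS_w(x)\,\fk{D}_{1,w}(y)$, and then prove that identity by iterating Monk's rule. The paper, however, proves the Cauchy identity \emph{bijectively}: after rewriting the $y_d$ in terms of $Y_d=y_d-y_{d+1}$, it expands the left side over bounded biwords $\binom{i_1\dots i_n}{d_1\dots d_n}$ with $i_j\le d_j$, and exhibits a weight-preserving bijection to pairs (reduced compatible sequence, labeled chain) by repeatedly applying a bijective proof $\BB$ of Monk's formula; this is precisely what lets the paper reinterpret the Huang--Pylyavskyy insertions as proofs of the Cauchy identity (Remark~\ref{rem:bij}) and work with the labeled-chain form of Proposition~\ref{p:dual-schub}. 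You instead prove the Cauchy identity algebraically via the telescoping operator identity $\bigl(\sum_k x_ky_k\bigr)\fkS_w=\sum_{w\to v}\mu(w\to v)\,\fkS_v$, obtained from Monk's rule through $x_k=\fkS_{s_k}-\fkS_{s_{k-1}}$ and the computation $[a\le k<b]-[a<k\le b]=\delta_{k,a}-\delta_{k,b}$; iterating from $\fkS_1=1$ and summing with the factors $1/p!$ assembles both sides. That computation is sound, and as a proof of Theorem~\ref{t:dual-def} it is shorter and independent of any choice of bijection, but it discards the bijective content that is the paper's actual contribution. One caveat applies to both proofs equally: invoking the ``if'' direction of Theorem~\ref{t:cauchy} requires knowing $\{P_n(\fk{D}_{1,w})\}\setminus\{0\}$ is a basis, which you flag and propose to verify by leading terms; you could sidestep this entirely by also expanding $e^{\sum_i x_iy_i}=\sum_w\fkS_w(x)\,\fk{D}_w(y)$ (the ``only if'' direction applied to the genuine dual pair from Equation~\eqref{eq:SD}) and comparing coefficients of the linearly independent $\fkS_w(x)$, which yields $\fk{D}_{1,w}=\fk{D}_w$ directly.
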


We give a new bijective proof that $\fk{D}_w = \fk{D}_{1,w}$ using the Cauchy identity (Theorem~\ref{t:cauchy}) and Monk's rule.
The first step is a straightforward reinterpretation of the Postnikov-Stanley formula in terms of labeled chains in Bruhat order.
After interpreting the left side of Equation~\eqref{eq:cauchy} appropriately, we show every bijective proof of Monk's formula gives a bijective proof that Equations~\eqref{eq:dual-schub} and~\eqref{eq:SD} are equivalent.
In particular, we interpret the insertions introduced by Huang and Pylyavskyy in~\cite{huang2022bumpless} as bijective proofs of this Cauchy identity.
As a consequence, we also uncover surprising symmetries and redundancies in the formulas for dual Schubert polynomials.

\section{background}

Let $\mathbb{P} = \{1,2 \dots\}$ be the set of positive integers.

\subsection{Permutations} 
Let $S_\infty$ be the set of permutations of $\mathbb{P}$ fixing all but finitely many elements.
This is a group generated by the simple transpositions $s_i = (i,i+1)$ for $i > 0$.
For $w \in S_\infty$, the \emph{length} $\ell(w)$ is the minimum number of generators in an expression $w = s_{a_1} \dots s_{a_p}$.
Such expressions are called \emph{reduced}, and $\Red(w)$ denotes the set of reduced expressions for $w$.

The \emph{Bruhat order} $\leq$ on $S_\infty$ is the partial order with cover relations $u \xrightarrow{} v$ if $v = u (a,b)$ with $\ell(v) = \ell(u) + 1$.
We say $v$ is a \emph{$k$--Bruhat cover} of $u$, denoted $u \xrightarrow{k} v$, if $a \leq k < b$.
For $u,w \in S_\infty$ with $u \leq w$, a \emph{chain} from $u$ to $w$ is a sequence $C$ of cover relations from $u$ to $w$
\[
C : u = v_0 \xrightarrow{} v_1 \xrightarrow{} \dots \xrightarrow{} v_p = w.
\]
Similarly, a \emph{labeled chain} is a chain of $k$--Bruhat covers
\[
C : u = v_0 \xrightarrow{k_1} v_1 \xrightarrow{k_2} \dots \xrightarrow{k_p} v_p = w.
\]
For $\textbf{k} = (k_1, \dots, k_p)$, let $C: u \xrightarrow{\textbf{k}} w$ denote a labeled chain with these labels.
Let $\mathcal{C}(u,v)$ denote the set of chains from $u$ to $v$.
Similarly, let $\mathcal{C}_\textbf{k}(u,v)$ be the set of labeled chains from $u$ to $v$ whose labels are $\textbf{k}$.

\subsection{Schubert polynomials} The material in this subsection appears in~\cite{manivel2001symmetric} unless another reference is given.
Schubert polynomials were originally defined algebraically, but now have several combinatorial definitions.
We will work with the Billey--Jockusch--Stanley formula~\cite{billey1993some}
\begin{equation}
\mathfrak{S}_w = \sum_{ (a_1,\dots,a_p) \in \Red(w)} \sum_{i_1 \leq \dots \leq i_p} x_{i_1} \dots x_{i_p}.
\label{eq:bjs}	
\end{equation}
Here $i_j \leq a_j$ for all $j$ and $i_k = i_{k+1}$ only if $a_k < a_{k+1}$.
Biwords $\binom{i_1 \dots i_p}{a_1 \dots a_p}$ satisfying these conditions are called \emph{reduced compatible sequences}.
Let $\RC(w)$ denote the set of reduced compatible sequences for $w$.

No combinatorial rule for multiplying arbitrary Schubert polynomials is known, but the product is determined by Monk's formula.
\begin{theorem}[Monk's formula]
	\label{t:monk}
	For $w \in S_\infty$ and $k \in \mathbb{P}$,
	\[
	\fkS_{s_k} \cdot \fkS_v = \sum_{v \xrightarrow{k} w} \fkS_w.
	\]
\end{theorem}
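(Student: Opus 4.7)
The plan is to give a bijective proof using the Billey--Jockusch--Stanley formula~\eqref{eq:bjs} just recalled. Both sides of Monk's formula will be expanded as positive sums of monomials, reducing the statement to constructing a weight-preserving bijection between two explicit combinatorial sets.

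First, $s_k$ has the unique reduced word $(k)$, whose compatible sequences are $\binom{i}{k}$ for $1 \le i \le k$. So~\eqref{eq:bjs} gives $\fkS_{s_k} = x_1 + x_2 + \cdots + x_k$, and the left-hand side of Monk's formula expands as
\[
\fkS_{s_k}\cdot\fkS_v \;=\; \sum_{i=1}^{k}\sum_{(\mathbf{j},\mathbf{a})\in\RC(v)} x_i\,x_{j_1}\cdots x_{j_p},
\]
while the right-hand side is $\sum_{v\xrightarrow{k} w}\sum_{(\mathbf{j}',\mathbf{a}')\in\RC(w)} x_{j'_1}\cdots x_{j'_{p+1}}$. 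It therefore suffices to produce a weight-preserving bijection
\[
[k]\times \RC(v) \;\longleftrightarrow\; \bigsqcup_{v\xrightarrow{k} w}\RC(w).
\]

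To construct it, given $(i,(\mathbf{j},\mathbf{a}))$ I would locate the unique position $\ell$ at which $i$ slots into $\mathbf{j}$ so the new sequence is weakly increasing and satisfies BJS's strictness condition at both neighbours; simultaneously insert at position $\ell$ of $\mathbf{a}$ the smallest generator index $c$ making the enlarged word reduced. A direct inspection of how the inserted transposition $s_c$ acts on the one-line notation shows that because $i \le k$, the new inversion is of the form $(a,b)$ with $a \le k < b$, so the resulting permutation $w$ is a $k$-Bruhat cover of $v$. The inverse locates the unique position in $\mathbf{a}'$ whose removal yields a reduced word for $v$ and whose $\mathbf{j}'$-label is in $[k]$.

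The main obstacle is verifying that a valid $c$ always exists and is unique, and that the forward and backward maps are mutually inverse. This is essentially the row-insertion step of Edelman--Greene insertion specialised to multiplication by a single generator, so the technical checks --- existence of $c$, reducedness of the enlarged word, and correct recovery of $(i,(\mathbf{j},\mathbf{a}))$ from $(w,(\mathbf{j}',\mathbf{a}'))$ --- are standard but require care in the edge cases where $i$ collides with existing entries of $\mathbf{j}$.
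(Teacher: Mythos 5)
The paper does not actually prove Monk's formula: it is quoted as a classical result, and the discussion following it only records that a bijective proof amounts to a weight-preserving bijection $[k]\times \RC(v)\to \bigsqcup_{v\xrightarrow{k}w}\RC(w)$, pointing to Bergeron--Billey's map $\BB$ and the Little-bump proof for actual constructions. Your expansion of both sides via the Billey--Jockusch--Stanley formula and the reduction to such a bijection reproduces exactly that observation; the entire mathematical content of the theorem is the construction and verification of the bijection, and that is where your proposal has a genuine gap: you assert the map rather than construct it, and the sketch you do give is not correct.

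The specific recipe --- insert $i$ into $\mathbf{j}$ at the unique monotone position $\ell$ and insert, at the \emph{same} position $\ell$ of $\mathbf{a}$, the smallest letter $c$ keeping the word reduced --- fails in general, because the new letter of the reduced word need not sit in the same position as the new letter of the compatible sequence. Concretely, take $v=1324=s_2$ and $k=2$, and the domain element $(i,R)=(2,\binom{1}{2})$ of weight $x_1x_2$. The $2$-Bruhat covers of $v$ are $2314$ and $1423$, with unique reduced words $(1,2)$ and $(3,2)$, so the weight-$x_1x_2$ elements of the codomain are $\binom{1\,2}{1\,2}$ and $\binom{1\,2}{3\,2}$. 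Your rule forces $i=2$ into position $2$ of $\mathbf{j}$ and hence produces a word of the form $(2,c)$; no choice of $c$ makes this a reduced compatible sequence for a $2$-Bruhat cover of $v$ (e.g.\ $c=3$ gives a permutation obtained from $v$ by a transposition $(a,b)$ with $a>k$, and $c=1$ violates the bound $i_j\le a_j$). The correct image $\binom{1\,2}{3\,2}$ has its new word letter in position $1$ while its new $x$-letter is in position $2$; reconciling this misalignment is precisely what the ladder-move cascade in $\BB$, or the chain of Little bumps, accomplishes, and it is also why the appeal to Edelman--Greene row insertion does not help: EG insertion does not interact with the bound $i_j\le a_j$ or with $k$-Bruhat covers in the way you need. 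The points you defer as ``standard but require care'' --- existence and uniqueness of the inserted letter, the claim that the resulting $w$ is a $k$-Bruhat cover, and invertibility --- are not edge cases but the whole theorem, and with the aligned single-insertion rule they are false.
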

Since $\fkS_k = x_1 + \dots + x_k$, one could give a bijective proof of Monk's formula by constructing a bijection
\[
[k] \times \RC(v) \xrightarrow{} \bigcup_{v \xrightarrow{k} w} \RC(w)
\]
so that the image of $(j,\binom{i_1 \dots i_p}{a_1 \dots a_p})$ has weight $x_j \cdot x_{i_1} \dots x_{i_p}$.
One such bijection, which we denote $\BB$, appears in~\cite{bergeron1993rc}.
Another bijection appears in~\cite{billey2019bijective} building on the Little bijection~\cite{little2003combinatorial}, and is attributed to Buch in~\cite{knutsonschubert}.

\subsection{Dual Schubert polynomials} We introduce some additional properties of dual Schubert polynomials from~\cite{postnikov2009chains}.
Let $X_i = x_i - x_{i+1}$.
Then for $u \xrightarrow{} v$ with $v = u (a,b)$, we have 
\[
\mu(u \xrightarrow{}v) := x_a - x_b = X_a + \dots + X_{b-1}.
\]
Furthermore, $u \xrightarrow{d} v$ precisely when $a \leq d < b$.
Letting the label $d$ identify the term $X_d$ in $\mu(u,v)$, we have the apparently novel reformulation of Equation~\eqref{eq:dual-schub}:
\begin{proposition}
	\label{p:dual-schub}
	For $u, v \in S_\infty$,
	\[
	\fk D_{u,v} = \frac{1}{p!}\sum_{C: u = v_0 \xrightarrow{d_1} v_1 \xrightarrow{d_2} \dots \xrightarrow{d_p} v_p = w} X_{d_1} \dots X_{d_p}.
	\]
\end{proposition}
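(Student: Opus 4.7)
The plan is to verify the identity by expanding each factor $\mu(v_{i-1}\to v_i)$ in the original Postnikov--Stanley formula \eqref{eq:dual-schub} using the telescoping identity stated immediately before the proposition, and then distributing the resulting product across the chain.

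First, I would note that for each cover $u\to v$ with $v = u(a,b)$, the identity
\[
\mu(u\to v) = x_a - x_b = X_a + X_{a+1} + \dots + X_{b-1}
\]
combined with the characterization $u \xrightarrow{d} v$ if and only if $a \leq d < b$ yields
\[
\mu(u\to v) = \sum_{d\,:\,u \xrightarrow{d} v} X_d.
\]
In other words, each summand $X_d$ on the right corresponds bijectively to a valid choice of $k$-Bruhat label $d$ refining the unlabeled cover $u \to v$.

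Next, I would substitute this expression into each factor of $\prod_{i=1}^p \mu(v_{i-1}\to v_i)$ and distribute across the $p$ factors. The distributive law produces a sum over tuples $(d_1,\ldots,d_p)$ with $v_{i-1}\xrightarrow{d_i} v_i$ for every $i$; this data is precisely a labeling of the chain $C$ that promotes it to a labeled chain. Interchanging summation orders -- first over all (unlabeled) chains from $u$ to $w$ and then over all valid labelings -- converts the Postnikov--Stanley sum into the sum over labeled chains appearing in the proposition, with the factor $1/p!$ carried through unchanged.

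I do not expect any serious obstacle here; the argument is essentially a bookkeeping exercise in the distributive law, with the one substantive input being the telescoping decomposition of $\mu(u\to v)$ into variables $X_d$ indexed exactly by the valid $k$-Bruhat labels for that cover. The bijectivity of the correspondence between terms $X_d$ and labels $d$ is immediate from the definition of $k$-Bruhat cover, so nothing more is needed.
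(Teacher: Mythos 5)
Your argument is correct and is exactly the paper's reasoning: the paper derives the proposition directly from the telescoping identity $\mu(u\to v)=X_a+\dots+X_{b-1}$ together with the observation that $u\xrightarrow{d}v$ precisely when $a\leq d<b$, so each term $X_d$ is identified with a label $d$ and distributing over the chain gives the labeled-chain sum. No gap; your write-up just spells out the bookkeeping the paper leaves implicit.
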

Recall Schubert polynomials and dual Schubert polynomials are dual with respect to the inner product $\langle \cdot ,\cdot  \rangle$ defined in Equation~\eqref{eq:inner_product}.
Moreover, in~\cite[\S6]{postnikov2009chains} the authors show skewing a dual Schubert polynomial is adjoint to multiplying by a Schubert polynomial:
\[
\langle \fkS_u \cdot \fkS_v , \fk D_w\rangle = \langle \fkS_v,\fk D_{u,w} \rangle = c_{uv}^w,
\]
where the $c_{uv}^w$'s are Littlewood--Richardson coefficients.
As an immediate consequence, we have
\begin{equation}
\label{eq:skew}
\fk D_{u,w} = \sum_{v} c_{uv}^w \fk D_{1,v}.	
\end{equation}
Combining Equation~\eqref{eq:skew} with Proposition~\ref{p:dual-schub}, we have:
\begin{corollary}
\label{c:chain-symmetry}
	For $u,w \in S_\infty$ with $u \xrightarrow{\textbf{d}} w$, we have
	\[
	|\mathcal{C}_\textbf{d}(u,w)| = \sum_v c_{uv}^w |\mathcal{C}_\textbf{d}(1,v)|, \  \mbox{which refines} \ |\mathcal{C}(u,w)| = \sum_v c_{uv}^w |\mathcal{C}(1,v)|.
	\]
\end{corollary}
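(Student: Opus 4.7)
The plan is to deduce the identity directly by combining Proposition~\ref{p:dual-schub} with the duality statement~\eqref{eq:skew}, then reading off monomial coefficients in $\C[X_1, X_2, \ldots]$.

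Expanding both sides of $\fk D_{u,w} = \sum_v c_{uv}^w \fk D_{1,v}$ via Proposition~\ref{p:dual-schub}, and noting that $c_{uv}^w = 0$ unless $\ell(v) = p := \ell(w) - \ell(u)$ (so the prefactors $\frac{1}{p!}$ cancel uniformly), we arrive at the polynomial identity
\[
\sum_{\textbf{d}} |\mathcal{C}_\textbf{d}(u,w)|\, X_{d_1} \cdots X_{d_p}
=
\sum_{\textbf{d}} \Bigl(\sum_v c_{uv}^w\, |\mathcal{C}_\textbf{d}(1,v)|\Bigr)\, X_{d_1} \cdots X_{d_p}
\]
in $\C[X_1, X_2, \ldots]$. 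Equating coefficients of each monomial yields the identity $|\mathcal{C}_\textbf{d}(u,w)| = \sum_v c_{uv}^w |\mathcal{C}_\textbf{d}(1,v)|$; the unlabeled refinement then follows by summing over all label sequences $\textbf{d}$ (equivalently, by specializing every $X_i \mapsto 1$).

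The subtle step is the coefficient extraction above. Since the $X_i$ commute in $\C[X_1,X_2,\ldots]$, direct comparison only gives an identity grouped by the multiset of $\textbf{d}$, namely $\sum_{\textbf{d}':\,\mathrm{mult}(\textbf{d}')=\alpha}|\mathcal{C}_{\textbf{d}'}(u,w)| = \sum_v c_{uv}^w \sum_{\textbf{d}':\,\mathrm{mult}(\textbf{d}')=\alpha}|\mathcal{C}_{\textbf{d}'}(1,v)|$, rather than the per-ordered-tuple statement in the Corollary. To upgrade, I would invoke the symmetry that $|\mathcal{C}_\textbf{d}(u,w)|$ depends only on the multiset of entries of $\textbf{d}$---a ``commutation of labels'' that the $\frac{1}{p!}$ normalization of Proposition~\ref{p:dual-schub} is tailored to exploit. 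Establishing this symmetry bijectively, via a local move on labeled chains that swaps any two adjacent labels $d_i \leftrightarrow d_{i+1}$, is the main technical hurdle I would need to carry out.
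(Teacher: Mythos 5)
Your main line is the same as the paper's: Corollary~\ref{c:chain-symmetry} is obtained by combining Proposition~\ref{p:dual-schub} with Equation~\eqref{eq:skew}, and you are right both that the $1/p!$ factors cancel (since $c_{uv}^w=0$ unless $\ell(v)=\ell(w)-\ell(u)$) and that comparing coefficients of commuting monomials only gives the identity summed over all orderings of a fixed label content $\alpha$. Where your write-up stops short is exactly the step you flag: upgrading to a fixed ordered tuple $\textbf{d}$ needs the fact that $|\mathcal{C}_\textbf{d}(u,w)|$ depends only on $\alpha(\textbf{d})$, and you propose to prove this by an adjacent-label-swap bijection, which is genuinely nontrivial (this is essentially what Lenart's growth diagrams accomplish, as noted after Equation~\eqref{eq:chain-symmetry}). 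But no bijection is needed: iterating Monk's rule (Theorem~\ref{t:monk}) gives Equation~\eqref{eq:chain-symmetry}, $(\fkS_{s_{d_p}}\cdots\fkS_{s_{d_1}})\fkS_u=\sum_{C:u\xrightarrow{\textbf{d}}w}\fkS_w$, and since the left side is a product of commuting polynomials, $|\mathcal{C}_\textbf{d}(u,w)|$ is invariant under permuting $\textbf{d}$. In fact Equation~\eqref{eq:chain-symmetry} proves the labeled identity outright, with no dual Schubert polynomials and no symmetry step: taking $u=1$ gives $\fkS_{s_{d_p}}\cdots\fkS_{s_{d_1}}=\sum_v|\mathcal{C}_\textbf{d}(1,v)|\,\fkS_v$; multiplying by $\fkS_u$ and extracting the coefficient of $\fkS_w$ yields $|\mathcal{C}_\textbf{d}(u,w)|=\sum_v c_{uv}^w|\mathcal{C}_\textbf{d}(1,v)|$ for the given ordered $\textbf{d}$. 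As written, your argument has a hole precisely at the symmetry step; with either of these fixes it closes.

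Your final sentence about the coarse statement is not correct. Summing the labeled identity over all $\textbf{d}$ (equivalently, setting every $X_i\mapsto 1$) does not count unlabeled chains: a chain whose $i$-th cover is $v_{i-1}\to v_i=v_{i-1}(a_i,b_i)$ admits $\prod_i(b_i-a_i)$ labelings, so this specialization counts each unlabeled chain with that multiplicity. What you actually obtain is the identity for the total number of \emph{labeled} chains (equivalently, for the weighted count $\sum_{C}\prod_i(b_i-a_i)$). The plain unlabeled-count identity cannot be reached this way, and read literally it is not even a consequence one should claim: for $u=2143$, $w=3412$ one has $|\mathcal{C}(u,w)|=2$, while the only nonvanishing coefficients are $c_{2143,2314}^{3412}=c_{2143,1423}^{3412}=1$ with $|\mathcal{C}(1,2314)|=|\mathcal{C}(1,1423)|=2$, so the right side is $4$. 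So state the coarse consequence of your per-$\textbf{d}$ identity as the all-labels (weighted) count, not as a count of unlabeled chains.
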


\section{$k$--chains and the cauchy identity}

We are now prepared to give our novel proof that the definitions of dual Schubert polynomials in terms of chains and $\langle \cdot, \cdot \rangle$ are equivalent, that is that $\fk{D}_w = \fk{D}_{1,w}$.

\begin{proof}[Proof of Theorem~\ref{t:dual-def}]
We first explain how to interpret the left hand side of Equation~\eqref{eq:cauchy} combinatorially.
For $Y_i = y_i - y_{i+1}$, we have $y_i = Y_i + Y_{i+1} + \dots$ so 
\begin{align*}
e^{x_1y_1 + x_2y_2 + \dots} &= \sum_{n} \frac{1}{n!}(x_1y_1 + x_2y_2 + \dots)^n\\
&= \sum_{n} \frac{1}{n!} \left(x_1 (Y_1 + Y_2 + \dots) + x_2 (Y_2 + Y_3 + \dots) + \dots\right)^n\\
&= \sum_{n}\frac{1}{n!} \sum_{\binom{i_1 \dots i_n}{d_1 \dots d_n},\  i_j \leq d_j} x_{i_1}Y_{d_1} \dots x_{i_n}Y_{d_n}.
\end{align*}
Following~\cite[Def 3.2]{huang2022bumpless}, we call the words appearing in the final sum \emph{bounded biwords}.
Therefore, to prove Equation~\eqref{eq:cauchy} bijectively, one must demonstrate a bijection from bounded biwords to pairs $(R,C)$ where $R$ is a reduced compatible sequence and $C$ a labeled chain for the same permutation.
This follows by repeated application of the map $\BB$, or any other bijective proof of Monk's formula.
\end{proof}

\begin{remark}
\label{rem:bij}
As observed in~\cite{huang2022bumpless}, one can interpret the repeated application of $\BB$ or any other bijective proof of Monk's rule as an insertion-like algorithm, with the compatible sequence as the insertion object and the Bruhat chain as the recording object.
The proof of Theorem~\ref{t:dual-def} shows these insertions can be viewed as bijective proofs of the Cauchy identity Equation~\eqref{eq:cauchy}.
\end{remark}

While bounded biwords do not have any restriction on order, one can insist they satisfy monotonicity conditions via a simple scaling factor.
Iterating Monk's formula with $\textbf{d} = (d_1,\dots,d_p)$ gives
\begin{equation}
	\label{eq:chain-symmetry}
	(\fk S_{s_{d_p}} \dots \fk S_{s_{d_1}}) \fk S_u = \sum_{C:u \xrightarrow{\textbf{d}} w} \fk S_w.
\end{equation}
Since the left hand side is symmetric, we see:
\begin{corollary}
Let $u,w \in S_\infty$ with $u \xrightarrow{\textbf{d}} w$.
If $\textbf{d}'$ is a permutation of $\textbf{d}$, then 
\[
|\mathcal{C}_\textbf{d}(u,w)| = |\mathcal{C}_{\textbf{d}'}(u,w)|.
\]

\end{corollary}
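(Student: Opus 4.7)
The plan is to read the corollary off directly from Equation~\eqref{eq:chain-symmetry}, which has already been established just above via iterated application of Monk's formula. First I would observe that the left-hand side of~\eqref{eq:chain-symmetry}, namely $\fk S_{s_{d_p}} \cdots \fk S_{s_{d_1}} \fk S_u$, is a product of polynomials in the commutative ring $\mathbb{C}[x_1,x_2,\ldots]$. Therefore it depends only on the multiset $\{d_1,\ldots,d_p\}$ and not on the ordered tuple $\textbf{d}$; in particular, if $\textbf{d}'$ is any permutation of $\textbf{d}$, then the right-hand side of~\eqref{eq:chain-symmetry} gives
\[
\sum_{C:u \xrightarrow{\textbf{d}} w'} \fk S_{w'} \;=\; \sum_{C:u \xrightarrow{\textbf{d}'} w'} \fk S_{w'}.
\]
Since $\{\fk S_{w'}\}_{w' \in S_\infty}$ is a $\mathbb{C}$--basis of $\mathbb{C}[x_1,x_2,\ldots]$, I can compare the coefficient of $\fk S_w$ on each side; by definition these coefficients are exactly $|\mathcal{C}_\textbf{d}(u,w)|$ and $|\mathcal{C}_{\textbf{d}'}(u,w)|$, which yields the claim.

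There is essentially no obstacle in this enumerative version of the argument: it uses only commutativity of polynomial multiplication and linear independence of Schubert polynomials, together with~\eqref{eq:chain-symmetry} itself (whose proof by induction on $p$ from Monk's rule is routine). The substantive version of the statement, which the preceding algebraic proof leaves implicit, is a \emph{bijective} one: constructing an explicit bijection $\mathcal{C}_\textbf{d}(u,w) \to \mathcal{C}_{\textbf{d}'}(u,w)$. Since the symmetric group acts on tuples by adjacent transpositions, it suffices to treat the case where $\textbf{d}'$ differs from $\textbf{d}$ by swapping $d_i$ and $d_{i+1}$. My approach here would be to feed the labeled chain through any bijective proof of Monk's rule (such as $\BB$ or Huang--Pylyavskyy insertion) used to prove Theorem~\ref{t:dual-def}, undoing the $i$-th and $(i{+}1)$-st steps and then reapplying them in the opposite order; checking that this is well-defined and inverse-compatible is where the actual combinatorial work would lie, and would be the main step I would focus on if pursuing a bijective proof rather than the algebraic one above.
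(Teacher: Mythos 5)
Your argument is exactly the paper's: it derives the identity from Equation~\eqref{eq:chain-symmetry} by noting the left-hand side is symmetric in the $d_i$'s (commutativity) and then comparing coefficients in the Schubert basis, which is precisely how the paper deduces the corollary "since the left hand side is symmetric." Your closing remark about an explicit bijection also matches the paper's note that a bijective proof can be extracted (the paper cites Lenart's growth diagrams for this), so the proposal is correct and follows the same route.
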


A bijective proof of this fact can be deduced using Lenart's growth diagrams~\cite{lenart2010growth}.
 
Fix a total order $\prec$ on $\P$.
A labeled chain $u \xrightarrow{\textbf{d}} w$ is \emph{$\prec$--increasing} if the labels are weakly increasing with respect to $\prec$.
Let $\mathcal{C}_\prec(u,w)$ be the set of labeled chains from $u$ to $w$ that are $\prec$--increasing.
For $\textbf{d} = (d_1,\dots d_p)$, define $\alpha(\textbf{d}) = (\alpha_1, \alpha_2, \dots)$  with $\alpha_i = \#\{j: d_j = i\}$.
Note there are $p!/\alpha(\textbf{d})!$ permutations of $\textbf{d}$ where $\alpha(\textbf{d})!$ is the product $\alpha_1! \cdot \alpha_2! \cdot \dots$.
Define the linear transformation $\overline{\;\cdot\;}$ by $\overline{x^\alpha} = \frac{p!}{\alpha!}x^\alpha$.
\begin{corollary}
\label{c:increasing}
For $u,w \in S_\infty$ with $u \leq w$ and $\prec$ a total order on $\P$, we have
\[\overline{\fk D_{u,w}} = \frac{1}{\alpha!}\sum_{C:u \xrightarrow{\textbf{d}} w\; \in \; \mathcal{C}_\prec(u,w)} X_{d_1} \dots X_{d_p} = \sum_{v} c_{uv}^w \overline{\fk D_v}.
\]	
\end{corollary}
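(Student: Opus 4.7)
The plan is to derive the first equality from Proposition~\ref{p:dual-schub} combined with the preceding corollary (the permutation symmetry $|\mathcal{C}_{\textbf d}(u,w)| = |\mathcal{C}_{\textbf d'}(u,w)|$), and to derive the second equality by applying the linear operator $\overline{\;\cdot\;}$ to Equation~\eqref{eq:skew}.

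For the first step, I would start from the Postnikov--Stanley formula
\[
\fk D_{u,w} = \frac{1}{p!}\sum_{C: u \xrightarrow{\textbf{d}} w} X_{d_1}\cdots X_{d_p}
\]
and reorganize the sum by grouping chains according to their label multiset $M$. For each $M$ there are exactly $p!/\alpha(M)!$ orderings $\textbf d$ of $M$, and by the preceding corollary each such ordering yields the same number $N_M := |\mathcal{C}_{\textbf d}(u,w)|$ of labeled chains; the total contribution of $M$ therefore equals $(p!/\alpha(M)!)\cdot N_M\cdot X^{\alpha(M)}$, which reduces to $N_M\,X^{\alpha(M)}/\alpha(M)!$ after dividing by $p!$. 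Since each $\prec$-increasing chain represents its label multiset uniquely, this regrouping is equivalent to summing $X^{\alpha(\textbf d)}/\alpha(\textbf d)!$ over $\mathcal{C}_\prec(u,w)$. Applying $\overline{\;\cdot\;}$ to both sides of this identity and using linearity then delivers the first stated equality.

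For the final equality, apply $\overline{\;\cdot\;}$ by linearity to Equation~\eqref{eq:skew} (which, together with Theorem~\ref{t:dual-def}, reads $\fk D_{u,w} = \sum_v c_{uv}^w \fk D_v$); this immediately produces $\overline{\fk D_{u,w}} = \sum_v c_{uv}^w \overline{\fk D_v}$.

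Since the preceding corollary already encapsulates the combinatorial content, no serious obstacle remains: the rest of the argument is purely bookkeeping with multinomial coefficients and linearity of $\overline{\;\cdot\;}$. The only subtlety is being careful that the $p!/\alpha(M)!$ orderings genuinely yield \emph{disjoint} chain sets (which is clear from the definition of labeled chains) so that no double-counting occurs when collapsing back to $\prec$-increasing representatives.
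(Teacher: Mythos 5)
Your argument is correct and matches the paper's own proof in essence: the first equality is obtained from Proposition~\ref{p:dual-schub} together with the definition of $\overline{\;\cdot\;}$, where your explicit regrouping by label multisets (using the preceding corollary that $|\mathcal{C}_{\textbf{d}}(u,w)|$ is invariant under permuting $\textbf{d}$, which the paper's one-line proof leaves implicit but sets up immediately beforehand) is exactly the intended bookkeeping, and the second equality comes from applying the linear map $\overline{\;\cdot\;}$ to Equation~\eqref{eq:skew} together with Theorem~\ref{t:dual-def}. The difference is only one of explicitness, not of method.
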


\begin{proof}
The first equality follows from Proposition~\ref{p:dual-schub} and the definition of $\overline{\;\cdot\;}$, while the latter follows from Equation~\eqref{eq:dual-schub}.	
\end{proof}

The total order $\prec$ allows us to prescribe any order on the values occurring in $\textbf{d}$.
For example, if $\prec$ is $<$, we are requiring $d_1 \leq d_2 \leq \dots \leq d_p$, while if $\prec$ has $2 \prec 1 \prec 3 \prec 4 \prec \dots$, we are requiring $\textbf{d}$ first have $2$'s, then $1$'s and so on.

\begin{remark}
	In~\cite{huang2022bumpless}, Huang and Pylyavskyy interpret certain Littlewood--Richardson coefficients as enumerating certain descending chains, or $>$--increasing in our notation.
Corollary~\ref{c:increasing} shows this is a general phenomenon.
One interpretation of Corollary~\ref{c:increasing} is that $\prec$--increasing chains ``know'' Littlewood--Richardson coefficients.
As observed in~\cite{huang2023knuth}, restricted sets of chains can prove easier to analyze.

More generally, one can use Proposition~\ref{p:dual-schub} to show certain chains ``know'' certain parts of Schubert calculus.
For example,  Corollary~\ref{c:chain-symmetry} shows
\[
|\mathcal{C}_{(k,\dots,k)}(u,w)| = \sum_{v} c_{uv}^w |\mathcal{C}_{(k,\dots,k)}(1,v)|.
\]
Since $v$ for which $\mathcal{C}_{(k,\dots,k)}(1,v)$ is non-empty is necessarily $k$--Grassmannian, and $\fk{S}_v$ is a Schur polynomial $s_\lambda(x_1,\dots,x_k)$, we recover Bergeron and Sottile's observation that $k$--chains ``know'' how to compute the product $\fkS_w \cdot s_\lambda(x_1,\dots,x_k)$~\cite{bergeron1998schubert}.
\end{remark}

\section*{Acknowledgements}

The author was partially supported by NSF Grant DMS-2054423 and thanks Shiliang Gao, Daoji Huang, Oliver Pechenik, Pasha Pylyavskyy and Tianyi Yu for helpful conversations and suggestions.

\bibliographystyle{plain} 
\bibliography{references}

\end{document}